\renewcommand{\leq}{\leqslant}
\renewcommand{\geq}{\geqslant}
\DeclareSymbolFont{myletters}{OML}{ztmcm}{m}{it}
\DeclareMathSymbol{\uplambda}{\mathord}{myletters}{"15}
\newtheorem{theorem}{Theorem}
\newtheorem{lemma}{Lemma}
\newtheorem{corollary}{Corollary}
\theoremstyle{definition}
\newtheorem{definition}{Definition}
\newtheorem{remark}{Remark}
\newenvironment{enbibliography}{\vspace{-0.5cm}}
\begin{document} 
\title{On the sufficient conditions for the S-shaped Buckley--Leverett function}
\author{N.~V.~Rastegaev \\ 
\small{St. Petersburg State University} \\ \small{
7/9 Universitetskaya nab., St. Petersburg, 199034 Russia} 
\\ \small{rastmusician@gmail.com}}
\renewcommand{\today}{}
\maketitle
\abstract{
The flux function in the Buckley--Leverett equation, that is, the function characterizing the ratio of the relative mobility functions of the two phases, is considered. The common conjecture stating that any convex mobilities result in an S-shaped Buckley--Leverett function is analyzed and disproved by a counterexample. Additionally, sufficient conditions for the S-shaped Buckley--Leverett function are given. The class of functions satisfying those conditions is proven to be closed under multiplication. Some functions from known relative mobility models are confirmed to be in that class.
}



\section{Introduction}
In fluid dynamics, the Buckley--Leverett equation is one of the simplest conservation laws used to model two-phase flow in porous media. The equation is given by:
\[
s_t + f(s)_x = 0,
\]
where $s = s(x,t)$ is the water saturation and $f$ is the fractional flow function, also known as the Buckley--Leverett function. This function characterizes the ratio of relative mobilities of the two phases, which is expressed as:
\[
f(s) = \dfrac{m_a(s)}{m_a(s) + m_b(1-s)},
\]
where $m_a$ and $m_b$ represent the relative phase mobilities, typically water and oil mobilities in the context of petroleum sciences. These mobility functions are often increasing and convex.

The assumption that the Buckley--Leverett function is S-shaped is prevalent. This is likely due to the fact that an S-shaped function is used as the only example in the principal work by Buckley and Leverett \cite{BL}, and this assumption is repeated in many papers thereafter. This assumption is no longer critical in the case of the Buckley--Leverett equation, since the Riemann problem for it can be solved analytically for any function $f$ by the convex hull construction given by Oleinik (see \cite{Oleinik, Gelfand}). Nonetheless, it is still often important in more general conservation systems that include more phases or components, or have additional parameters such as temperature (see \cite{JnW,Bahetal,Castetal,Castaetal2,Castaetal3,Wahetal}).
However, there is no comprehensive research on when $f$ is actually S-shaped. The prevalent conjecture among engineers is that convex mobilities result in an S-shaped fractional flow function. Some mathematicians hold similar expectations. The only paper known to the author (and the one that inspired this work) investigating sufficient conditions for the S-shaped function is the paper by Casta\~{n}eda \cite{Castaneda}. That paper proves that when relative phase mobilities are convex power functions, the resultant Buckley--Leverett function is S-shaped. It also states that the author could not find a counterexample to the convex conjecture. Also noteworthy is \cite[Claim 17]{Castetal}. It does not give any conditions for when $f$ is S-shaped, but instead allows us to circumvent the question altogether and find the solution to the Riemann problem regardless, but only when the initial discontinuity connects pure or almost pure states ($s=0$ and $s=1$).

In this paper, we present a counterexample where convex relative phase mobilities produce a fractional flow function that has more than one inflection point. Furthermore, we provide sufficient conditions for the S-shaped Buckley--Leverett function. The paper has the following structure: Section \ref{sec:suff} defines the class of convex mobilities and proves the theorem asserting that mobilities from that class always give an S-shaped fractional flow function. Additionally, it proves the proposed class of convex functions is closed under multiplication. Section \ref{sec:counter} presents the counterexample of two functions outside the previously defined class that produce a fractional flow function that is not S-shaped. Appendix \ref{ap:A} contains the graphs illustrating the counterexamples provided in this paper. Appendix \ref{ap:B} applies the proposed conditions to some known relative mobility models.

\section{Sufficient conditions for the S-shaped function}
\label{sec:suff}
\begin{definition}\label{main_def}
Let $\mathcal{M}$ be a set of functions $m\in \mathcal{C}^2[0,1]$ such that
\begin{itemize}
    \item[(C1)] $m(s)>0$ for $s>0$, $m(0)=0$;
    \item[(C2)] $m'(s)>0$ for $s>0$, $m'(0)=0$;
    \item[(C3)] $m''(s)>0$ for $s>0$;
    \item[(C4)] $\dfrac{m''}{m'}$ is a decreasing function on $(0,1)$.
\end{itemize}
\end{definition}

\begin{remark}\label{rem:Corey}
Any power function $m(s) = As^a$ with $A>0$ and power $a > 1$ is in $\mathcal{M}$, since $\dfrac{m''}{m'} = \dfrac{a-1}{s}$ is decreasing. Thus Theorem \ref{main_theorem} below covers the result of \cite[Theorem 409]{Castaneda}.
\end{remark}

\begin{lemma}\label{lemma_on_C4}
Let $m\in\mathcal{M}$. Then $\dfrac{m'}{m}$ is also a decreasing function. Therefore, for all $m\in\mathcal{M}$ the following variation of (C4) holds:
\begin{itemize}
    \item[\textnormal{(C4*)}] \textnormal{ $\dfrac{m'}{m}$ and $\dfrac{m''}{m'}$ are decreasing functions on $(0,1)$.}
\end{itemize}
\end{lemma}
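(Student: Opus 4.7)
The plan is to reduce the monotonicity of $m'/m$ to the already-assumed monotonicity of $m''/m'$ by means of Cauchy's mean value theorem. A routine differentiation gives
\[
\left(\frac{m'}{m}\right)' = \frac{m''\,m - (m')^2}{m^2},
\]
so the desired conclusion $(m'/m)' < 0$ on $(0,1)$ is equivalent to the pointwise inequality $m''(s)/m'(s) < m'(s)/m(s)$, after clearing the positive denominator $m(s)\,m'(s)$ permitted by (C1) and (C2).

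For the main step I would apply Cauchy's mean value theorem to the pair $(m',m)$ on the interval $[0,s]$ for each $s\in(0,1)$. The vanishing $m(0) = m'(0) = 0$ from (C1) and (C2) turns the left-hand ratio into $m'(s)/m(s)$, and the required nonvanishing of the denominator's derivative $m'$ on $(0,s)$ is exactly the positivity asserted in (C2). The theorem thus produces $\xi \in (0,s)$ with
\[
\frac{m'(s)}{m(s)} = \frac{m'(s) - m'(0)}{m(s) - m(0)} = \frac{m''(\xi)}{m'(\xi)}.
\]

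Finally I would invoke (C4): since $m''/m'$ is decreasing on $(0,1)$ and $\xi < s$, we have $m''(\xi)/m'(\xi) > m''(s)/m'(s)$, and combining with the Cauchy identity above yields $m'(s)/m(s) > m''(s)/m'(s)$, which is exactly what is needed to conclude $(m'/m)'(s) < 0$. Statement (C4*) then follows at once from (C4) together with the newly established monotonicity of $m'/m$. The only mild subtlety is that Cauchy's MVT is being applied across the endpoint $s=0$ where both $m$ and $m'$ vanish simultaneously; but the nonvanishing of the denominator's derivative $m'$ on the \emph{open} interval $(0,s)$ given by (C2) is precisely what makes the theorem applicable, and no $0/0$ arises inside the argument. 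Beyond that, the proof is a short chain of inequalities and I do not anticipate any real obstacle.
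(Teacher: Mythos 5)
Your proposal is correct and follows essentially the same route as the paper: differentiate $m'/m$, reduce the claim to $m'(s)/m(s) > m''(s)/m''(s)\big/\!$ — more precisely to $m'(s)/m(s) > m''(s)/m'(s)$ — and obtain that inequality from Cauchy's mean value theorem on $[0,s]$ together with (C4). Your extra remark about the applicability of Cauchy's MVT at the endpoint where $m$ and $m'$ both vanish is a welcome clarification that the paper leaves implicit.
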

\begin{proof}
To prove $\dfrac{m'}{m}$ is a decreasing function we need to demonstrate
\[
\left( \dfrac{m'}{m} \right)' = \dfrac{m''m - m'^2}{m^2} < 0,
\]
which, considering (C1)--(C3), is equivalent to
\[
\dfrac{m'}{m} > \dfrac{m''}{m'},
\]
which, considering (C4), follows from Cauchy's Mean Value Theorem:
\[
\dfrac{m'(x)}{m(x)} = \dfrac{m'(x) - m'(0)}{m(x) - m(0)} = \dfrac{m''(\widetilde{x})}{m'(\widetilde{x})} > \dfrac{m''(x)}{m'(x)}
\]
holds for all $x\in(0,1)$ and certain $\widetilde{x} \in (0, x)$.
\end{proof}

\begin{remark}
Though we require only $\mathcal{C}^2$ smoothness from our functions, in the proof below we will operate the third derivative as if it is regular, but only in the context of examining the local monotonicity of the second derivative. Any such argument could be easily modified to avoid the mention of the third derivative, but that would make the proof needlessly cumbersome. 
\end{remark}

\begin{theorem}\label{main_theorem}
Let $m_a$ and $m_b$ be two mobility functions from the class $\mathcal{M}$. Then the fractional flow function $f(s) = \dfrac{m_a(s)}{m_a(s)+m_b(1-s)}$ is S-shaped, that is, there exists a unique inflection point $s^*\in(0,1)$, such that $f''(s^*) = 0$.
\end{theorem}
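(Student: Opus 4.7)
The plan is to prove that $f''$ has exactly one zero on $(0,1)$ via the following two ingredients: (a) $f''(s)>0$ for $s$ near $0$ and $f''(s)<0$ for $s$ near $1$, and (b) $f'''(s^*)<0$ at every interior zero $s^*$ of $f''$. Since $f''$ is continuous, these together imply at most one sign change, and combined with (a) exactly one, giving the unique inflection point asserted by the theorem.

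Set $a(s)=m_a(s)$, $b(s)=m_b(1-s)$, and introduce $u=m_a'/m_a$, $V(s)=m_b'(1-s)/m_b(1-s)$, $\Phi=u+V$ (all positive on $(0,1)$). A direct computation gives $f'=f(1-f)\Phi$ and
\[
f''=f(1-f)\bigl[(1-2f)\Phi^2+\Phi'\bigr],
\]
so $f''(s^*)=0$ is equivalent to $\Phi'(s^*)=(2f(s^*)-1)\Phi(s^*)^2$. The endpoint claim (a) will follow from leading-order asymptotics of $u$ and $V$ at the endpoints (each diverges like the local order of vanishing of $m_a$ or $m_b$ divided by the distance to the corresponding endpoint), which make $(1-2f)\Phi^2$ dominate $\Phi'$ with the expected sign. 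Differentiating $f''$ once more, substituting the inflection relation together with $f'=f(1-f)\Phi$ and the algebraic identity $f(1-f)+(1-2f)^2=\tfrac14+3(f-\tfrac12)^2$ (using $f(s^*)-\tfrac12=\Phi'(s^*)/(2\Phi(s^*)^2)$ at the inflection), a short manipulation yields
\[
\operatorname{sgn} f'''(s^*)=\operatorname{sgn}\bigl(2\Phi\Phi''-3\Phi'^2-\Phi^4\bigr)\big|_{s^*}.
\]
Hence (b) reduces to the pointwise inequality $2\Phi\Phi''-3\Phi'^2<\Phi^4$ on $(0,1)$.

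To establish this inequality I decompose along $\Phi=u+V$ as
\[
2\Phi\Phi''-3\Phi'^2=(2uu''-3u'^2)+(2VV''-3V'^2)+(2uV''+2Vu''-6u'V').
\]
With $p=m_a''/m_a'$ and $q(s)=(m_b''/m_b')(1-s)$, the identities $u'=u(p-u)$ and $V'=V(V-q)$ lead to $2uu''-3u'^2=u^2(u^2-p^2+2p')<u^4$ and $2VV''-3V'^2=V^2(V^2-q^2-2q')<V^4$, where the strict inequalities use $p,q>0$ together with $p'<0$ (from (C4) for $m_a$) and $q'>0$ (from (C4) for $m_b$ with the change of variable $s\mapsto1-s$). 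The cross term expands to
\[
2uV\bigl[(u-p)u+(u-p)^2+3(u-p)(V-q)+(V-q)V+(V-q)^2+(p'-q')\bigr],
\]
and here is where the argument hinges: by Lemma \ref{lemma_on_C4} we have $0<u-p<u$ and $0<V-q<V$, so the first five bracketed summands are strictly bounded by $u^2$, $u^2$, $3uV$, $V^2$, $V^2$ respectively, while $(p'-q')<0$ only tightens the bound. Summing gives the cross term $<2uV(2u^2+3uV+2V^2)=4u^3V+6u^2V^2+4uV^3$, and combining the three pieces yields $2\Phi\Phi''-3\Phi'^2<(u+V)^4=\Phi^4$. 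The main obstacle in this plan is tracking the cross-term bookkeeping cleanly; everything else is relatively routine once the key relation $f'=f(1-f)\Phi$ is in hand.
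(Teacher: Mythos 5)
Your proposal is correct in its essential content but follows a genuinely different route from the paper. The paper works with $m=m_a+m_b$ and $h=m_a'm_b+m_am_b'$, locates every zero of $f''$ between the sign-change points $s_1$ of $m'$ and $s_2$ of $m_a''m_b-m_am_b''$ (which also gives existence, by evaluating $f''$ at $s_1$ and $s_2$), and then proves $f'''(s^*)<0$ via the differential inequality $h''-h'\left[\frac{m''}{m'}+\frac{m'}{m}\right]<0$, with a case split according to the order of $s_1$ and $s_2$. You instead exploit $f'=f(1-f)\Phi$ with $\Phi=u+V$, reduce the downcrossing condition to the pointwise inequality $2\Phi\Phi''-3\Phi'^2<\Phi^4$, and verify it by the Riccati substitutions $u'=u(p-u)$, $V'=V(V-q)$ together with the bounds $0<u-p<u$ and $0<V-q<V$ from Lemma \ref{lemma_on_C4}; the term-by-term comparison against the binomial expansion of $(u+V)^4$ checks out exactly (I verified the identities $2uu''-3u'^2=u^2(u^2-p^2+2p')$, $2VV''-3V'^2=V^2(V^2-q^2-2q')$, the cross-term expansion, and the sign formula $\operatorname{sgn}f'''(s^*)=\operatorname{sgn}(2\Phi\Phi''-3\Phi'^2-\Phi^4)$). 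What your approach buys is the elimination of the paper's case analysis and of the localization between $s_1$ and $s_2$: your key inequality holds everywhere on $(0,1)$, so every zero of $f''$ is automatically a strict downcrossing. What it costs is a heavier symbolic computation and a less transparent existence argument.

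The one place that needs shoring up is claim (a). Members of $\mathcal{M}$ are only $\mathcal{C}^2$ with $m(0)=m'(0)=0$, so there is in general no ``local order of vanishing,'' and the asserted asymptotics $u\sim(\text{order})/s$ are not available. The claim itself is still true: since $u=(\ln m_a)'$ is decreasing (Lemma \ref{lemma_on_C4}) and $m_a(0)=0$ with $m_a>0$ on $(0,1)$, a bounded $u$ near $0$ would force $m_a\equiv 0$ there, so $u\to+\infty$ as $s\to0^+$; likewise $p=m_a''/m_a'\to+\infty$ (else $m_a'$ could not vanish at $0$), and then $(1-2f)\Phi^2+\Phi'=up+2uV+2V^2-Vq-2f\Phi^2$ with $2f\Phi^2=o(up)$ gives $f''>0$ near $0$, and symmetrically $f''<0$ near $1$. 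Alternatively, you could simply borrow the paper's existence step (evaluate $f''$ at the zeros of $m'$ and of $m_a''m_b-m_am_b''$), which avoids endpoint asymptotics altogether. With that repair the argument is complete.
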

\begin{proof}
We note that $(m_b(1-s))' = -m'_b(1-s)$, thus $m_b$ changes sign with every derivative. Keeping that in mind, we omit the variables $s$ and $1-s$ in the notation hereafter, implying that $m_b$ and its derivatives are applied to the variable $1-s$.

Denote $m = m_a + m_b$. We solve the equation
\begin{equation}\label{main_eq}
f'' = \dfrac{(m''_am_b-m_am''_b)m - 2m'(m'_am_b+m_am'_b)}{m^3} = 0,
\end{equation}
and aim to demonstrate that it has a unique solution on $(0,1)$. 
Note immediately that if $m''_am_b-m_am''_b$ and $-m'$ have the same sign, then $f''$ has it too. Thus, any solution of \eqref{main_eq} must satisfy
\begin{equation}\label{sign_restriction}
(m''_am_b-m_am''_b)m' > 0 \quad \text{or} \quad 
m''_am_b-m_am''_b = m' = 0.
\end{equation}
Note also that $m'$ is increasing due to (C3), since $m'' = m''_a + m''_b > 0$, and it changes sign exactly one time from negative to positive. Denote by $s_1$ the sign change point:
\[
m' < 0, \quad s<s_1; \qquad m'>0, \quad s>s_1.
\]
Similarly, due to (C4*) we deduce that $\dfrac{m_a''}{m_a} = \dfrac{m_a''}{m'_a} \dfrac{m_a'}{m_a}$ is decreasing and, keeping in mind the difference in the variable, $\dfrac{m_b''}{m_b} = \dfrac{m_b''}{m'_b} \dfrac{m_b'}{m_b}$ is increasing. Thus, $\dfrac{m_a''}{m_a} - \dfrac{m''_b}{m_b}$ is a decreasing function and has a single sign change from positive to negative. Denote the sign change point $s_2$ and note that
\[
m''_am_b - m_am''_b > 0, \quad s<s_2; \qquad m''_am_b - m_am''_b < 0, \quad s>s_2.
\]
Due to the restriction \eqref{sign_restriction}, we know that any solution of \eqref{main_eq} must be between $s_1$ and $s_2$. If $s_1 = s_2$, then $s^* = s_1 = s_2$ is the unique solution of \eqref{main_eq}, and the theorem is proved. Otherwise, we consider separately the cases $s_1 < s_2$ and $s_1 > s_2$.

\underline{Case $s_1 < s_2$.} On $(s_1, s_2)$ we have 
\[
m' > 0, \quad 
m''_am_b - m_am''_b > 0. 
\]
It is clear that $f''(s_1) > 0$ and $f''(s_2) < 0$, thus the solution exists. In order to prove its uniqueness, we
denote 
\[
h = m_a'm_b + m_a m'_b,
\]
keeping in mind that in the current case $h'=m''_am_b - m_am''_b>0$. Using this new notation, we rewrite
\begin{equation}\label{ddfrewrite}
f' = \dfrac{h}{m^2}, \qquad f'' = \dfrac{h'm - 2m'h}{m^3}.
\end{equation}
Note that
\[
\dfrac{h'}{m_a'm_b'} = \dfrac{m_a''m_b - m_a m''_b}{m_a'm_b'} = \dfrac{m_a''}{m_a'}\dfrac{m_b}{m_b'} - \dfrac{m_b''}{m_b'}\dfrac{m_a}{m_a'} 
\]
is decreasing due to (C4*), thus
\[
m_a'm_b' \left( \dfrac{h'}{m_a'm_b'} \right)' = 
h'' - h' \left[ \dfrac{m_a''}{m_a'} - \dfrac{m_b''}{m_b'} \right] < 0.
\]
Note also that
\[
\dfrac{m''}{m'} + \dfrac{m'}{m} > \dfrac{m''}{m'} = \dfrac{m''_a + m''_b}{m'_a - m'_b} > \dfrac{m_a''}{m_a'} > \dfrac{m_a''}{m_a'} - \dfrac{m_b''}{m_b'},
\]
thus combining these relations we obtain
\begin{equation}\label{h_est}
h'' - h' \left[ \dfrac{m''}{m'} + \dfrac{m'}{m} \right] < 0.
\end{equation}
Let $s=s^*$ be a solution of \eqref{main_eq}, that is, $f''(s^*) = 0$. Then from \eqref{ddfrewrite} we have $h'(s^*)m(s^*) = 2m'(s^*)h(s^*)$. Therefore, 
\[
f'''(s^*) = \left.\dfrac{(h'm^2 - 2mm'h)'}{m^4}\right|_{s=s^*} -  \dfrac{4m'(s^*)}{m(s^*)} f''(s^*).
\]
The second term is zero, and the first could be estimated using \eqref{h_est}:
\begin{align*}
\left.\dfrac{(h'm^2 - 2mm'h)'}{m^2}\right|_{s=s^*} & = {}
h''(s^*) - 2\dfrac{m'(s^*)}{m(s^*)} h(s^*) \left[\dfrac{m''(s^*)}{m'(s^*)} + \dfrac{m'(s^*)}{m(s^*)} \right] \\
& {} = h''(s^*) - h'(s^*) \left[\dfrac{m''(s^*)}{m'(s^*)} + \dfrac{m'(s^*)}{m(s^*)} \right] < 0.
\end{align*}
Therefore, $f'''(s^*) < 0$ and $f''$ changes sign from positive to negative at $s^*$. There could only be one solution with that property, so $s^*$ is unique.

\underline{Case $s_1 > s_2$.} In this case on $(s_2, s_1)$ we have 
\[
m' < 0, \quad 
h' = m''_am_b - m_am''_b < 0. 
\]
Now $f''(s_2) > 0$ and $f''(s_1) < 0$, thus the solution exists.
The steps of the uniqueness proof are the same.
We note that 
\[
\dfrac{m''}{m'} + \dfrac{m'}{m} < \dfrac{m''}{m'} = \dfrac{m''_a + m''_b}{m'_a - m'_b} < -\dfrac{m_b''}{m_b'} < \dfrac{m_a''}{m_a'} - \dfrac{m_b''}{m_b'},
\]
therefore \eqref{h_est} still holds. Other than that, no modifications are required, thus the theorem is proved.

\end{proof}

In practice, the following theorem is very helpful in verifying condition (C4) for some common functions.

\begin{theorem}\label{thm_closed}
Let $m_1$ and $m_2$ be positive and increasing functions on $(0,1)$ satisfying (C4*). Then their product $m_1m_2$ also satisfies (C4*).
\end{theorem}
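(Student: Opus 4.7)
The first half of (C4*) for $m_1m_2$ is immediate: $(m_1m_2)'/(m_1m_2) = m_1'/m_1 + m_2'/m_2$ is a sum of two decreasing functions, hence decreasing.

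For the second half, I set $u = m_1'/m_1$, $v = m_2'/m_2$, $\beta_1 = m_1''/m_1'$, $\beta_2 = m_2''/m_2'$, and use the product rule to write
\[
\frac{(m_1m_2)''}{(m_1m_2)'}
= \frac{u\beta_1 + v\beta_2}{u+v} + \frac{2uv}{u+v}.
\]
The second summand equals $2/(1/u + 1/v)$, twice the harmonic mean of $u$ and $v$. Since $u$ and $v$ are positive and decreasing, their reciprocals are increasing, so $1/u + 1/v$ is increasing and $2uv/(u+v)$ is decreasing. This term is disposed of.

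It remains to handle the first summand, a weighted average $p\beta_1 + (1-p)\beta_2$ with $p = u/(u+v)$. Since the weight $p$ varies with $s$, monotonicity is not automatic. Differentiating gives $p\beta_1' + (1-p)\beta_2' + p'(\beta_1 - \beta_2)$. The first two terms are nonpositive by hypothesis. To control the cross-term, I would substitute $p' = (u'v - uv')/(u+v)^2$, clear the $(u+v)^2$ denominator, and try to rearrange the resulting numerator into a sum of manifestly nonpositive contributions built from $u' \le 0$, $v' \le 0$, $\beta_1' \le 0$, $\beta_2' \le 0$ and the decreasing behaviour of $u$, $v$.

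The main obstacle I foresee is precisely this cross-term. Its sign is a priori indeterminate, so monotonicity of the $\beta_i$ alone cannot settle it; the monotonicity of $u$ and $v$ has to be used in tandem. I expect the decisive step to be an algebraic identity pairing $p'(\beta_1-\beta_2)$ with a suitable portion of the strictly negative $\beta_i'$-contributions, so that the remainder is a combination of nonpositive quantities with known signs. Getting this pairing to come out cleanly — most likely by grouping the terms of $((u\beta_1+v\beta_2+2uv)/(u+v))'\cdot(u+v)^2$ along the $(\beta_1-\beta_2)(u'v-uv')$ axis — is where the real work lies.
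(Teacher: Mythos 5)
Your reduction $\dfrac{(m_1m_2)''}{(m_1m_2)'} = \dfrac{u\beta_1+v\beta_2}{u+v} + \dfrac{2uv}{u+v}$ is correct, the harmonic-mean term is indeed decreasing, and the first half of (C4*) is fine. But the argument stops exactly where it matters: the cross-term $p'(\beta_1-\beta_2)$ is only announced as ``where the real work lies,'' not controlled, so this is a plan rather than a proof. Worse, the route you sketch --- establishing that the weighted average $p\beta_1+(1-p)\beta_2$ is decreasing \emph{on its own} --- cannot be completed, because that summand need not be decreasing. Take $m_1(s)=s^2$ and $m_2(s)=\exp\bigl(10s+\varepsilon(s-s^2/2)\bigr)$ with small $\varepsilon>0$; both are positive, increasing, and strictly satisfy (C4*). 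At $s=1/2$, in the limit $\varepsilon\to0$, one has $u=4$, $v=10$, $\beta_1=2$, $\beta_2=10$, $p=2/7$, $p'=-20/49$, hence $p'(\beta_1-\beta_2)=160/49$ while $p\beta_1'+(1-p)\beta_2'=-8/7$; the weighted average is increasing there. The full expression is still decreasing only because the derivative of $2uv/(u+v)$ equals $-400/49$ and absorbs the excess. So the two summands cannot be disposed of separately: any successful cancellation must mix the $2uv$ contribution into the cross-term estimate, which is precisely the step you left open.

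For comparison, the paper sidesteps the quotient entirely: it expands $(m_1m_2)'''(m_1m_2)'-((m_1m_2)'')^2$ and regroups it as
$m_2^2(m_1'''m_1'-(m_1'')^2)+m_1^2(m_2'''m_2'-(m_2'')^2)+m_2m_2'(m_1'''m_1-m_1''m_1')+m_1m_1'(m_2'''m_2-m_2''m_2')+(m_2')^2(m_1''m_1-(m_1')^2)+(m_1')^2(m_2''m_2-(m_2')^2)-2(m_1''m_1-(m_1')^2)(m_2''m_2-(m_2')^2)$,
where each bracket is negative (using, in addition to the two hypotheses, the derived monotonicity of $m_i''/m_i$), each coefficient is positive, and the last term is $-2$ times a product of two negative factors. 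If you want to rescue your formulation, you would need an identity of this strength for the numerator of $\bigl((u\beta_1+v\beta_2+2uv)/(u+v)\bigr)'$ taken as a whole.
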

\begin{proof}
In this proof, we will use the notation $i = 1, 2$. Note that since $\dfrac{m_i'}{m_i}$ and $\dfrac{m_i''}{m_i'}$ are decreasing, their product $\dfrac{m_i''}{m_i}$ is also decreasing. Consider the derivatives of these fractions and we obtain
\[
m_i''m_i - (m_i')^2 < 0, \quad m_i'''m_i' - (m_i'')^2 < 0, \quad m_i'''m_i - m_i''m_i' < 0.
\]
It is easy to see that 
\[
\left(\dfrac{(m_1m_2)'}{m_1m_2}\right)' = 
(\ln (m_1 m_2))'' =
(\ln m_1)'' + (\ln m_2)'' = \left(\dfrac{m_1'}{m_1}\right)' + \left(\dfrac{m_2'}{m_2}\right)' < 0.
\]
Sadly, the same simple trick does not work for the second fraction. Instead, we prove that $\dfrac{(m_1m_2)''}{(m_1m_2)'}$ is decreasing by expanding the derivatives and grouping some terms to achieve a similar estimate:
\begin{align*}
(m_1m_2)'''&(m_1m_2)' - ((m_1m_2)'')^2 \\
& {} = (m_1'''m_2 + 3m_1''m_2' + 3m_1'm_2'' +m_1m_2''')(m_1'm_2+m_1m_2') \\
& {} - (m_1''m_2+2m_1'm_2'+m_1m_2'')^2 \\
& {} = m_2^2 (m_1'''m_1' - (m_1'')^2) + m_1^2 (m_2'''m_2' - (m_2'')^2) \\
& {} + m_2'm_2 (m_1'''m_1 - m_1''m_1') + m_1'm_1 (m_2'''m_2 - m_2''m_2') \\
& {} + (m_2')^2 (m_1''m_1 - (m_1')^2) + (m_1')^2 (m_2''m_2 - (m_2')^2) \\
& {} - 2(m_1''m_1 - (m_1')^2)(m_2''m_2 - (m_2')^2) < 0.
\end{align*}
\end{proof}

\begin{corollary}
$\mathcal{M}$ is closed under multiplication.
\end{corollary}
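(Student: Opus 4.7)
The plan is to verify conditions (C1)--(C4) for the product $m_1 m_2$ given $m_1, m_2 \in \mathcal{M}$, with almost all of the work being handed off to Theorem \ref{thm_closed}. The $\mathcal{C}^2[0,1]$ smoothness is immediate since $\mathcal{C}^2$ is closed under multiplication, so only the four itemized conditions need attention.

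First I would dispatch the three ``easy'' conditions directly. For (C1), note $(m_1 m_2)(0) = 0 \cdot 0 = 0$, while on $(0,1]$ the product of two positive numbers is positive. For (C2), the product rule gives $(m_1 m_2)'(0) = m_1'(0) m_2(0) + m_1(0) m_2'(0) = 0$ using that each $m_i$ vanishes at $0$ together with its first derivative, and on $(0,1)$ each summand of $m_1' m_2 + m_1 m_2'$ is strictly positive by (C1)--(C2) for the factors. For (C3), Leibniz yields $(m_1 m_2)'' = m_1'' m_2 + 2 m_1' m_2' + m_1 m_2''$, and every term is positive on $(0,1)$ by (C1)--(C3) for $m_1$ and $m_2$.

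The only nontrivial condition is (C4), and this is exactly the conclusion of Theorem \ref{thm_closed}. To invoke it, I would first apply Lemma \ref{lemma_on_C4} to upgrade each hypothesis $m_i \in \mathcal{M}$ to the stronger statement (C4*); together with (C1)--(C2) this gives that each $m_i$ is positive, increasing, and satisfies (C4*) on $(0,1)$, precisely the hypotheses of Theorem \ref{thm_closed}. The theorem then returns (C4*) for $m_1 m_2$, and in particular the monotonicity of $(m_1 m_2)''/(m_1 m_2)'$, which is (C4).

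I do not expect any genuine obstacle here: the heavy lifting has already been done in Theorem \ref{thm_closed}, and what remains is the routine observation that (C1)--(C3) are preserved under multiplication of functions satisfying them. The corollary is thus essentially a repackaging of Theorem \ref{thm_closed} together with the definitional checks.
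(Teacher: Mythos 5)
Your proposal is correct and matches the paper's (implicit) intent: the corollary is stated without proof precisely because it amounts to the routine verification of (C1)--(C3) for the product plus an application of Theorem \ref{thm_closed} (via Lemma \ref{lemma_on_C4} to supply the hypothesis (C4*)), exactly as you lay out. No gaps.
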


\section{Counterexample for the convex conjecture}
\label{sec:counter}
The first counterexample we constructed goes as follows. Consider the phase mobilities
\[
m_a(s) = m_b(s) = s^{1.1}e^{s^{10}}.
\]
It is easy to calculate the derivatives:
\[
m_a'(s) = 1.1s^{0.1}e^{s^{10}} + 10s^{10.1}e^{s^{10}} > 0, 
\]
\[
m_a''(s) = 0.11s^{-0.9}e^{s^{10}} + 112s^{9.1}e^{s^{10}} + 
100s^{19.1}e^{s^{10}} > 0,
\]
and it is clear that these mobilities admit (C1), (C2) and (C3).
However, function 
\[
\dfrac{m_a''}{m_a} = 0.11 s^{-2} + 112 s^{8} + 100 s^{18}
\]
changes monotonicity, since
\[
\left(\dfrac{m_a''}{m_a}\right)' = s^{-3} (1800 s^{20} + 896 s^{10} -0.22) = 0
\]
has a solution $s \approx 0.4355$, thus (C4) is not upheld. That leads to $f$ having $3$ inflection points: $s=0.5$ is an inflection point due to symmetry, but it has the wrong monotonicity, that is, $f'''(0.5) > 0$. Thus there must exist two additional inflection points, one before $0.5$ and one after. Though it is clear on the graph of $f''$ that it has three zeroes, the graph of $f$ is less obvious, barely deviating from the diagonal (see Fig.\ref{Fig1} and Fig.\ref{Fig2} in Appendix \ref{ap:A}). But the first counterexample allowed us to find a family of similar functions with more pronounced graphs. One of them is
\[
m_a(s) = m_b(s) = s^{1.1}(1 + 15 s^{10}).
\]
See Fig.\ref{Fig3} and Fig.\ref{Fig4} in Appendix \ref{ap:A} for the corresponding plots. The principles upon which these examples were constructed are generalized in the following theorem.

\begin{theorem}
Let $m_a = m_b$ be mobility function satisfying (C1)--(C3) from Definition \ref{main_def}. Additionally, let
\begin{equation}\label{counterexample_property}
\left.\left( \dfrac{m_a''}{m_a^3} \right)'\right|_{s=0.5} > 0.
\end{equation}
Then the corresponding fractional flow function has more than one inflection point and therefore is not S-shaped.
\end{theorem}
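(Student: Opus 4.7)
The plan is to leverage the symmetry $m_a = m_b$ to reduce the whole argument to a single derivative computation at the midpoint $s = 0.5$. A direct substitution gives $f(1-s) = 1 - f(s)$, and differentiating twice shows $f''(1-s) = -f''(s)$, so $f''(0.5) = 0$ for free. To prove $f$ is not S-shaped it therefore suffices to produce a second zero of $f''$ inside $(0, 0.5)$ at which the sign genuinely changes; its reflection across $0.5$ will then yield a third inflection point in $(0.5, 1)$ automatically.

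My next step is to compute $f'''(0.5)$ starting from the identity $f'' = (h'm - 2m'h)/m^3$ with $h = m_a'm_b + m_a m_b'$ and $m = m_a + m_b$ recorded in the proof of Theorem~\ref{main_theorem}. In the symmetric setting both $m'(0.5) = 0$ and $h'(0.5) = m_a''(0.5) m_a(0.5) - m_a(0.5) m_a''(0.5) = 0$, and these two vanishings kill most of the terms produced when one differentiates $f''$ once more; at $s = 0.5$ only
\[
f'''(0.5) = \frac{h''(0.5)\,m(0.5) - 2m''(0.5)\,h(0.5)}{m(0.5)^3}
\]
survives. Substituting $m(0.5) = 2m_a(0.5)$, $m''(0.5) = 2m_a''(0.5)$, $h(0.5) = 2 m_a(0.5) m_a'(0.5)$ and $h''(0.5) = 2 (m_a(0.5) m_a'''(0.5) - m_a'(0.5) m_a''(0.5))$, a short algebraic simplification collapses the right-hand side to
\[
f'''(0.5) = \frac{m_a(0.5)^2}{2}\left(\dfrac{m_a''}{m_a^3}\right)'\!\bigg|_{s=0.5},
\]
which is strictly positive by hypothesis~\eqref{counterexample_property}. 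Thus $f''$ strictly increases through zero at $s = 0.5$, and in particular $f''(s) < 0$ for $s$ immediately to the left of $0.5$.

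The final ingredient is to exhibit a point in $(0, 0.5)$ where $f''$ is positive. Here I would use the boundary data: the quotient formula for $f'$ together with $m_a(0) = m_a'(0) = 0$ gives $f(0) = 0$ and $f'(0) = 0$, while the symmetry forces $f(0.5) = 1/2$. If $f$ were concave on the whole of $[0, 0.5]$, the tangent-line inequality at $s = 0$ would imply $f \leq 0$ there, contradicting $f(0.5) = 1/2$; hence $f''(s_0) > 0$ for some $s_0 \in (0, 0.5)$. Combining this with the sign information of the preceding paragraph, the intermediate value theorem delivers a zero of $f''$ in $(s_0, 0.5)$ at which the sign genuinely changes, i.e.\ a bona fide second inflection point.

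I expect the second paragraph to be the main technical obstacle: the clean identification of $f'''(0.5)$ with $(m_a''/m_a^3)'(0.5)$ is what makes the otherwise curious hypothesis~\eqref{counterexample_property} look natural, and the cancellations depend entirely on the simultaneous vanishing of $m'$ and $h'$ at the midpoint, which in turn is a feature of the symmetric setup. The remaining steps are routine symmetry bookkeeping and an elementary tangent-line bound.
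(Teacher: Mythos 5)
Your proposal is correct and follows essentially the same route as the paper: symmetry gives $f''(0.5)=0$, the vanishing of $m'$ and $h'$ at the midpoint collapses $f'''(0.5)$ to $\frac{m_a^2}{2}\left(\frac{m_a''}{m_a^3}\right)'\big|_{s=0.5}>0$, and the wrong-way sign change forces extra inflection points. The only difference is that you make explicit, via the tangent-line argument at $s=0$, the existence of a point in $(0,0.5)$ with $f''>0$, a step the paper leaves implicit.
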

\begin{proof}
Note that due to symmetry, we have
\[
m'(0.5) = h'(0.5) = 0.
\]
Therefore
\[
f''(0.5) = \left. \dfrac{h'm - 2m'h}{m^3} \right|_{s=0.5} = 0,
\]
thus, $s=0.5$ is an inflection point. However,
\begin{align*}
f'''(0.5) & = \left. \left(\dfrac{h'm - 2m'h}{m^3}\right)' \right|_{s=0.5} =
\left. \dfrac{h''m - 2m''h}{m^3} \right|_{s=0.5} \\
&{} =  \left. \dfrac{m_a'''m_a^2 - 3 m_a'' m_a' m_a}{2m_a^3} \right|_{s=0.5} = \dfrac{m_a^2}{2} \left.\left( \dfrac{m_a''}{m_a^3} \right)'\right|_{s=0.5} > 0,
\end{align*}
so $f''$ changes sign from negative to positive at $s=0.5$, therefore it cannot be a unique inflection point and there must exist at least two more inflection points, one before $0.5$ and one after.
\end{proof}
It is a fairly straightforward task to check that \eqref{counterexample_property} holds for both counterexamples provided above. It is also clear that \eqref{counterexample_property} contradicts (C4). But the class of functions satisfying neither (C4) nor \eqref{counterexample_property} is vast, and it leads to questions on the rigidity of (C4). Is it possible to construct a wider class $\mathcal{M}$, for which Theorem \ref{main_theorem} holds, by weakening the (C4) restriction? We leave it an open problem for now. What is clear is that \eqref{counterexample_property} is not the only way to construct a counterexample, just the most direct one. It is possible to construct mobilities that do not satisfy \eqref{counterexample_property} but still lead to additional inflection points. To give an example, functions $m_a(s) = m_b(s) = s^{1.1}(1+15s^{30})$ result in $f'''(0.5) < 0$, but $f$ still has $5$ inflection points, as clearly shown on Fig.~\ref{Fig5} and Fig.~\ref{Fig6}.

\begin{appendices}

\section{}
\label{ap:A}

The graphs for the counterexample $m_a(s) = m_b(s) = s^{1.1}e^{s^{10}}$:
\begin{figure}[!htbp]
  \centering
  \begin{minipage}[b]{0.49\textwidth}
    \includegraphics[width=\textwidth]{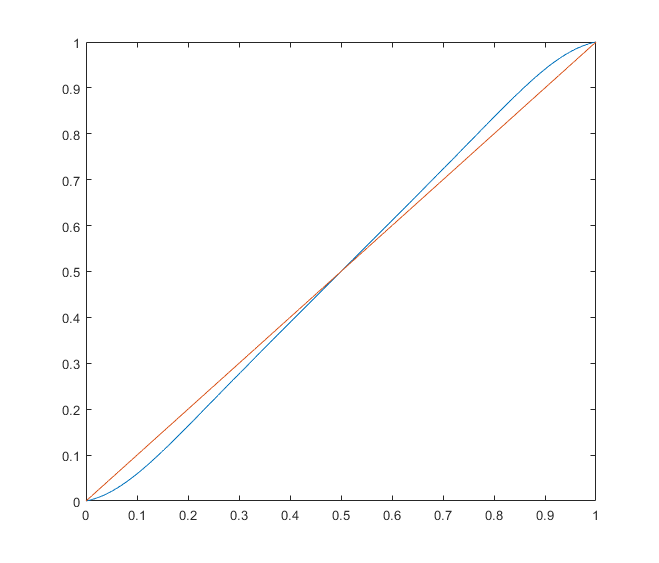}
    \caption{Function $f$.}
    \label{Fig1}
  \end{minipage}
  \begin{minipage}[b]{0.49\textwidth}
    \includegraphics[width=\textwidth]{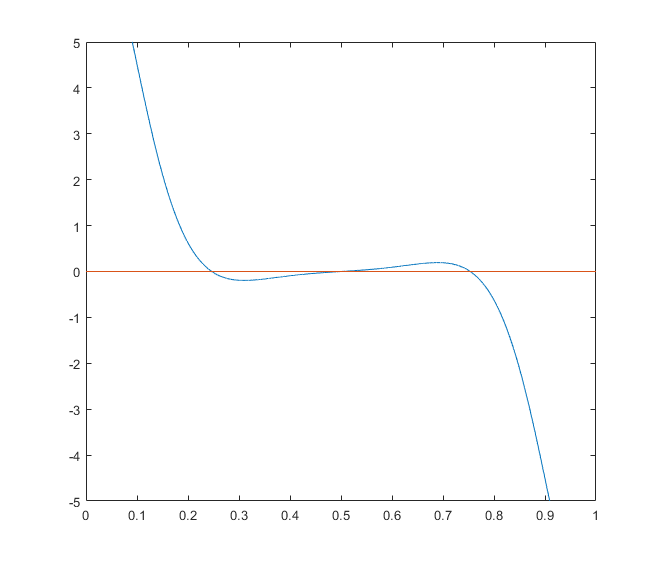}
    \caption{$f''$ with $3$ zeroes.}
    \label{Fig2}
  \end{minipage}
\end{figure}

\newpage

The graphs for the counterexample $m_a(s) = m_b(s) = s^{1.1}(1+15s^{10})$:
\begin{figure}[!htbp]
  \centering
  \begin{minipage}[b]{0.49\textwidth}
    \includegraphics[width=\textwidth]{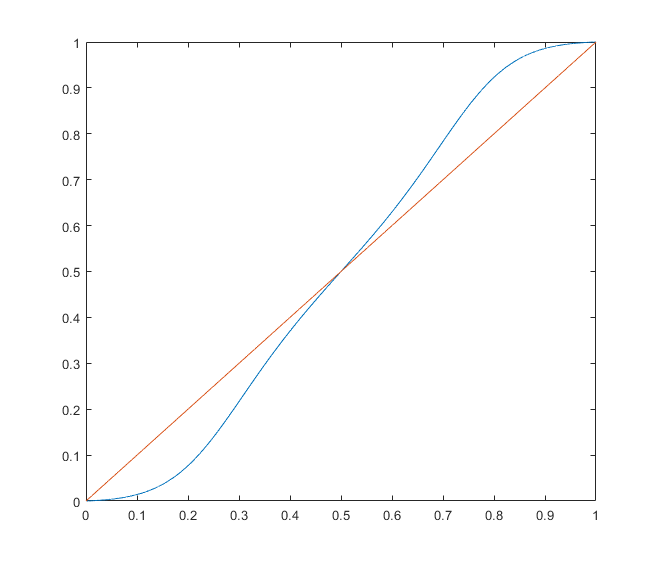}
    \caption{Function $f$.}
    \label{Fig3}
  \end{minipage}
  \begin{minipage}[b]{0.49\textwidth}
    \includegraphics[width=\textwidth]{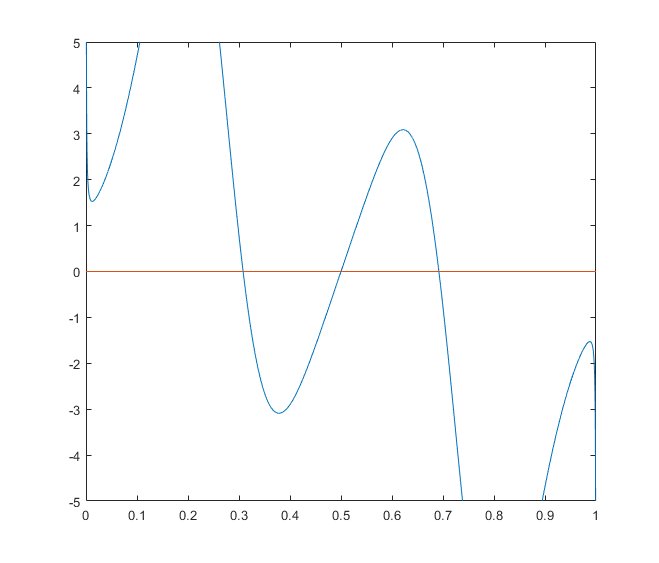}
    \caption{$f''$ with $3$ zeroes.}
    \label{Fig4}
  \end{minipage}
\end{figure}

The graphs for the counterexample $m_a(s) = m_b(s) = s^{1.1}(1+15s^{30})$:
\begin{figure}[!htbp]
  \centering
  \begin{minipage}[b]{0.49\textwidth}
    \includegraphics[width=\textwidth]{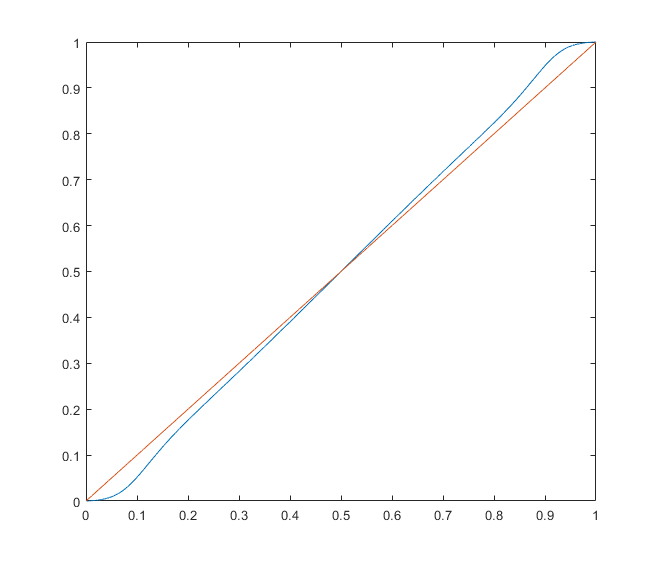}
    \caption{Function $f$.}
    \label{Fig5}
  \end{minipage}
  \begin{minipage}[b]{0.49\textwidth}
    \includegraphics[width=\textwidth]{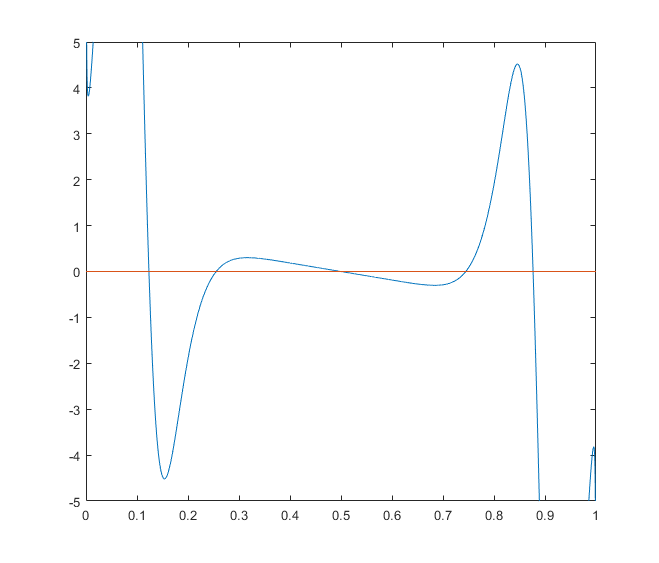}
    \caption{$f''$ with $5$ zeroes.}
    \label{Fig6}
  \end{minipage}
\end{figure}

\newpage

\section{}
\label{ap:B}
In this appendix, we check various known relative mobility models against the conditions (C1)--(C4).

\subsection{Modified Brooks and Corey model}
The simplest and the most commonly used relative mobility model (see \cite{Lake}) is the power law model
\[
m_a(s) = As^a, \quad m_b(s) = Bs^b,
\]
where $A, B > 0$, $a, b > 1$. As we already noted in Remark \ref{rem:Corey}, any power function with power greater than $1$ satisfies (C1)--(C4), therefore $m_a, m_b \in \mathcal{M}$.

\subsection{Brooks and Corey model}
The original model proposed by Corey (see \cite{Corey}) is
\[
m_a(s) = s^4, \quad m_b(s) = s^2(1 - (1-s)^2).
\]
Later, Brooks and Corey introduce a parameter to generalize that model (see \cite{BrooksCorey}). That resulted in the following functions:
\[
m_a(s) = s^{\frac{2+3\lambda}{\lambda}}, \quad m_b(s) = s^2 \left(1 - (1-s)^{\frac{2+\lambda}{\lambda}}\right),
\]
where $\lambda$ is the pore size distribution index. This was further generalized in \cite{Chen} to include an additional parameter:
\[
m_a(s) = s^{\eta + \frac{2+\lambda}{\lambda}}, \quad m_b(s) = s^\eta \left(1 - (1-s)^{\frac{2+\lambda}{\lambda}}\right).
\]
In all variations, $m_a$ is a power function, and thus already considered previously. To study $m_b$ we first analyze the expression in the brackets.

\begin{lemma}
Function $n_\alpha(s) = 1 - (1-s)^\alpha$ on $(0,1)$ is positive, increasing and satisfies (C4*) for all $\alpha>1$.
\end{lemma}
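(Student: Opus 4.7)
The plan is direct computation of the derivatives of $n_\alpha$, followed by verification of each claimed property in turn. Since $n_\alpha(0)=0$, $n_\alpha(1)=1$, and
\[
n_\alpha'(s)=\alpha(1-s)^{\alpha-1}, \qquad n_\alpha''(s)=-\alpha(\alpha-1)(1-s)^{\alpha-2},
\]
positivity and strict monotonicity of $n_\alpha$ on $(0,1)$ are immediate. Observe that $n_\alpha''<0$, so $n_\alpha$ is concave and fails (C3); in particular, Lemma~\ref{lemma_on_C4} is not available, and both halves of (C4*) must be checked by hand.

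For the ratio $n_\alpha''/n_\alpha'$, the computation is trivial after cancellation: it reduces to $-(\alpha-1)/(1-s)$, which is decreasing in $s$ on $(0,1)$ for $\alpha>1$ since $1/(1-s)$ is increasing. This half is immediate.

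For the ratio $n_\alpha'/n_\alpha$, I would avoid differentiating the quotient directly (the numerator would be unpleasant) and instead argue that the reciprocal $n_\alpha/n_\alpha'$ is strictly increasing. The key algebraic simplification is
\[
\frac{n_\alpha(s)}{n_\alpha'(s)} = \frac{1-(1-s)^\alpha}{\alpha(1-s)^{\alpha-1}} = \frac{1}{\alpha(1-s)^{\alpha-1}} - \frac{1-s}{\alpha},
\]
which splits the reciprocal into a sum of two manifestly increasing functions of $s$ on $(0,1)$ (the first because $(1-s)^{\alpha-1}$ is a positive decreasing function for $\alpha>1$, the second because $-(1-s)/\alpha$ is linear with positive slope). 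Hence $n_\alpha/n_\alpha'$ is increasing, so $n_\alpha'/n_\alpha$ is decreasing, establishing the second half of (C4*).

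The only mildly non-obvious step is the decision to invert the ratio $n_\alpha'/n_\alpha$ before differentiating; once that is done, the algebra separates into two monotone pieces and no real obstacle remains. I therefore do not expect any serious difficulty in carrying out this proof.
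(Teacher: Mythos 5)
Your proof is correct, and for the ratio $n_\alpha''/n_\alpha'$ it coincides with the paper's computation. The only divergence is in the treatment of $n_\alpha'/n_\alpha$: you invert the ratio and split $n_\alpha/n_\alpha'$ into two manifestly increasing pieces, which works, but the detour is unnecessary. The quotient derivative you wanted to avoid has numerator $n_\alpha'' n_\alpha - (n_\alpha')^2$, and this is negative \emph{at a glance}: $n_\alpha''<0$ while $n_\alpha, n_\alpha'>0$ on $(0,1)$, so a positive increasing concave function vanishing at $0$ is automatically log-concave. That one-line observation is exactly what the paper uses. Your decomposition buys nothing extra here, though it is a perfectly sound alternative; your remark that Lemma~\ref{lemma_on_C4} is unavailable because (C3) fails is a correct and worthwhile observation that the paper leaves implicit.
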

\begin{proof}
\[
n'_\alpha(s) = \alpha(1-s)^{\alpha-1} > 0, \quad n''_\alpha(s) = -\alpha(\alpha-1)(1-s)^{\alpha-2} <0. 
\]
Therefore,
\[
n''_\alpha n_\alpha - (n'_\alpha)^2 < 0,
\]
so $n'_\alpha/n_\alpha$ is decreasing and
\[
\dfrac{n''_\alpha}{n_\alpha} = \dfrac{1 - \alpha}{1-s}
\]
is also a decreasing function.
\end{proof}
Therefore, due to Theorem \ref{thm_closed} we obtain (C1), (C2) and (C4) for $m_b$ for all $\eta > 1$ and $\lambda > 0$. The only condition left to check is (C3). Denote $\alpha = \frac{2+\lambda}{\lambda}$.
\[
m''_b(s) = \eta(\eta-1)s^{\eta-2} \left(1 - (1-s)^{\alpha}\right) + 2\eta\alpha s^{\eta-1} (1-s)^{\alpha-1} -\alpha(\alpha-1)s^\eta(1-s)^{\alpha-2}.
\]
It is clear that for $1 < \alpha < 2$ or $\lambda > 2$ the last term blows up near $s=1$, thus (C3) is broken. Otherwise, we can prove $m_b$ is convex.
\begin{lemma}
Let $\alpha, \eta \geq 2$. Then $m_b(s) = s^\eta \left(1 - (1-s)^\alpha\right)$ is convex on $(0,1)$.
\end{lemma}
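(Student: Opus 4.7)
The plan is to prove $m_b''(s) \geq 0$ on $(0,1)$. Using the expression for $m_b''$ already derived in the paper and factoring out $s^{\eta-2} > 0$, the task reduces to showing
\[
g(s) := \eta(\eta-1)\bigl(1-(1-s)^\alpha\bigr) + 2\eta\alpha\, s(1-s)^{\alpha-1} - \alpha(\alpha-1) s^2 (1-s)^{\alpha-2} \;\geq\; 0.
\]
Only the last term is negative, so one must show it is dominated by the first two.

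The central idea is to split $g = A + B$ where
\[
A(s) := \eta(\eta-1)\bigl[1-(1-s)^\alpha - \alpha s (1-s)^{\alpha-1}\bigr], \qquad B(s) := \alpha s(1-s)^{\alpha-2}\bigl[\eta(\eta+1)(1-s) - (\alpha-1)s\bigr].
\]
The identity $g=A+B$ follows from $\eta(\eta+1)-\eta(\eta-1) = 2\eta$. This split is chosen because $A$ has a clean integral representation: one checks that $A(0)=0$ and $A'(s)=\eta(\eta-1)\alpha(\alpha-1)s(1-s)^{\alpha-2}$, so $A(s) = \eta(\eta-1)\alpha(\alpha-1)\int_0^s u(1-u)^{\alpha-2}\,du$. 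Since $(1-u)^{\alpha-2}$ is nonincreasing on $[0,1]$ (this is where $\alpha \geq 2$ enters), I would pull out its endpoint value to obtain
\[
A(s) \;\geq\; \frac{\eta(\eta-1)\alpha(\alpha-1)}{2}\, s^2 (1-s)^{\alpha-2}.
\]

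On the region where $B(s)\geq 0$, we immediately have $g\geq A \geq 0$. Off this region the bracket in $B$ is negative, and the crude inequality $(\alpha-1)s - \eta(\eta+1)(1-s) \leq (\alpha-1)s$ gives $|B(s)| \leq \alpha(\alpha-1) s^2 (1-s)^{\alpha-2}$. Combining the two bounds,
\[
g(s) \;\geq\; \left[\frac{\eta(\eta-1)}{2}-1\right]\alpha(\alpha-1)s^2(1-s)^{\alpha-2} \;=\; \frac{(\eta-2)(\eta+1)}{2}\,\alpha(\alpha-1)s^2(1-s)^{\alpha-2} \;\geq\; 0
\]
exactly when $\eta \geq 2$. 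The main obstacle is guessing the decomposition $g=A+B$; once in place, both hypotheses ($\alpha\geq 2$ and $\eta\geq 2$) enter once each in a transparent way, and the combined bound is sharp at $\alpha=\eta=2$, where $m_b''(s)=12s(1-s)$ vanishes at the endpoints.
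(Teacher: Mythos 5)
Your proof is correct, and it takes a genuinely different route from the paper. The paper writes $m_b''(s)=s^{\eta-2}\bigl[\eta(\eta-1)-P(s)(1-s)^{\alpha-2}\bigr]$ with a quadratic $P$, observes the bracket is positive near the endpoints, shows its derivative vanishes at the roots of another quadratic (hence at most two interior critical points), and then checks positivity at each such critical point $z$ by combining the critical-point relation $P(z)=2\gamma z-2\eta$ with Bernoulli's inequality $(1-z)^{2-\alpha}\ge 1+(\alpha-2)z$. You instead prove the pointwise inequality directly: the decomposition $g=A+B$ is valid (the coefficient bookkeeping $\eta(\eta+1)-\eta(\eta-1)=2\eta$ checks out), the integral representation $A(s)=\eta(\eta-1)\alpha(\alpha-1)\int_0^s u(1-u)^{\alpha-2}\,du$ is correct since $A(0)=0$ and the derivative matches, the lower bound $A(s)\ge\tfrac{1}{2}\eta(\eta-1)\alpha(\alpha-1)s^2(1-s)^{\alpha-2}$ uses $\alpha\ge 2$ exactly once, and the bound $|B(s)|\le\alpha(\alpha-1)s^2(1-s)^{\alpha-2}$ on the set where $B<0$ is immediate; the final margin $\tfrac{1}{2}(\eta-2)(\eta+1)\ge 0$ uses $\eta\ge 2$ exactly once. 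What your version buys is the elimination of the extremum-counting step and the Bernoulli estimate, replacing a ``check the sign at finitely many critical points'' argument with a global pointwise bound in which each hypothesis enters visibly and only once; what the paper's version buys is a more mechanical reduction to sign analysis of quadratics, which may generalize more readily if one perturbs the model. Either proof could stand in the paper as written.
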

\begin{proof}
We rewrite
\begin{equation}\label{ddmb}
m''_b(s) = s^{\eta-2}\Big[\eta(\eta-1) - P(s)(1-s)^{\alpha-2}\Big],
\end{equation}
where
\[
P(s) = \eta(\eta-1) - 2\eta\gamma s + \gamma(\gamma+1) s^2, \qquad \gamma = \eta + \alpha - 1.
\]
It is clear that $m''_b$ is positive near $0$ and $1$. It is also easy to see that
\[
(\eta(\eta-1) - P(s)(1-s)^{\alpha-2})' = (1-s)^{\alpha-3} ((\alpha-2)P(s) - (1-s)P'(s)),
\]
and
\begin{equation}\label{two_zeros}
(\alpha-2)P(s) - (1-s)P'(s) = \alpha \left(\gamma (\gamma+1) s^2 -  2\gamma(\eta+1) s + \eta(\eta+1)\right) = 0
\end{equation}
has at most two zeroes on $(0,1)$. Therefore, the expression in the brackets in \eqref{ddmb} has at most two extremums on $(0,1)$, and if we show them to be positive, the proof will be concluded. Let $z\in(0,1)$ be a root of \eqref{two_zeros}. Note that \eqref{two_zeros} gives us
\[
P(z) = 2\gamma z - 2\eta.
\]
Using this and Bernoulli's inequality, we estimate
\begin{align*}
\eta(\eta-1)(1-z)^{2-\alpha} - P(z)  & \geq \eta(\eta-1) + \eta(\eta-1)(\alpha-2)z - 2\gamma z + 2\eta \\
& {} = (\eta - 2z)(\eta+1) + (\alpha-2)(\eta-2)(\eta+1)z > 0.
\end{align*}

\end{proof}
Therefore, $m_b \in\mathcal{M}$ for all $\alpha, \eta \geq 2$ (or equivalently  $0<\lambda \leq 2, \eta \geq 2$).

\subsection{Chierici model}
Chierici (see \cite{Chierici}) proposed an exponential law for the relative mobility functions:
\[
m_a(s) = A \exp \left[-B\left( \dfrac{s}{1-s} \right)^{-M} \right].
\]
This expression is often not convex, so we are going to provide an example of Chierici functions satisfying our conditions. Let $M = 1$, $B > 2$. Then
\[
m_a(s) = A \exp \left[-B \dfrac{1-s}{s}  \right], \quad
m_a'(s) = \dfrac{AB}{s^2} \exp \left[-B \dfrac{1-s}{s}  \right] > 0,
\]
\[
m_a''(s) = \dfrac{AB^2 - 2ABs}{s^4} \exp \left[-B \dfrac{1-s}{s}  \right] > 0,
\]
\[
\left(\dfrac{m'_a}{m_a}\right)' = -\dfrac{2B}{s^3} < 0, \quad 
\left(\dfrac{m''_a}{m'_a}\right)' =
\left(\dfrac{B-2s}{s^2}\right)' = \dfrac{2s-2B}{s^3} < 0.
\]
Therefore $m_a \in \mathcal{M}$ for $M=1$ and all $B>2$.

\end{appendices}

\section*{Acknowledgements}

The author thanks F.~Bakharev, A.~Enin, A.~Nazarov and Yu.~Petrova for the fruitful discussions of the problem.

The work is supported by the Ministry of Science and Higher Education of the Russian
Federation (agreement no. 075-15-2022-287).
\bigskip
\bigskip

\begin{enbibliography}{99}
\addcontentsline{toc}{section}{References}

\bibitem{BL} Buckley, S.~E. and Leverett, M., 1942. Mechanism of fluid displacement in sands. Transactions of the AIME, 146(01), pp.~107-116.

\bibitem{Oleinik}
Oleinik, O.~A., 1957. Discontinuous solutions of non-linear differential equations. Uspekhi Matematicheskikh Nauk, 12(3)(75), pp.~3-73 (in Russian). English translation in American Mathematical Society Translations, 26(2), 1963, pp.~95-172.

\bibitem{Gelfand}
Gelfand, I.~M., 1959. Some problems in the theory of quasilinear equations. Uspekhi Matematicheskikh Nauk, 14(2), pp.~87-158 (in Russian). English translation in Transactions of the American Mathematical Society, 29(2), 1963, pp.~295-381.

\bibitem{JnW}
Johansen, T. and Winther, R., 1988. The solution of the Riemann problem for a hyperbolic system of conservation laws modeling polymer flooding. SIAM journal on mathematical analysis, 19(3), pp.~541-566.

\bibitem{Bahetal}
Bakharev, F., Enin, A., Petrova, Y. and Rastegaev, N., 2021. Impact of dissipation ratio on vanishing viscosity solutions of the Riemann problem for chemical flooding model. arXiv preprint arXiv:2111.15001.

\bibitem{Castetal}
Casta\~{n}eda, P., Furtado, F. and Marchesin, D., 2013. The convex permeability three-phase flow in reservoirs. IMPA Preprint S\'{e}rie E-2258, pp.~1-34.

\bibitem{Castaetal2}
Casta\~{n}eda, P., Abreu, E., Furtado, F. and Marchesin, D., 2016. On a universal structure for immiscible three-phase flow in virgin reservoirs. Computational Geosciences, 20, pp.~171-185.

\bibitem{Castaetal3}
Tang, J., Casta\~{n}eda, P., Marchesin, D. and Rossen, W.~R., 2019. Three-Phase Fractional-Flow Theory of Foam-Oil Displacement in Porous Media With Multiple Steady States. Water Resources Research, 55(12), pp.~10319-10339.

\bibitem{Wahetal}
Wahanik, H., Eftekhari, A.~A., Bruining, J., Marchesin, D. and Wolf, K.~H., 2010, October. Analytical solutions for mixed CO2-water injection in geothermal reservoirs. In Canadian Unconventional Resources and International Petroleum Conference. OnePetro.

\bibitem{Castaneda}
Casta\~{n}eda, P., 2016. Dogma: S-shaped. The Mathematical Intelligencer, 38, pp.~10-13.

\bibitem{Lake}
Lake, L.W., 1989. Enhanced oil recovery, Chapter 3.

\bibitem{Corey}
Corey, A.~T., 1954. The interrelation between gas and oil relative permeabilities. Producers Monthly 19 (November), pp.~38-41.

\bibitem{BrooksCorey}
Brooks, R.~H. and Corey, A.~T., 1964. Hydraulic Properties of Porous Media. Hydrology Papers, No. 3, Colorado State U., Fort Collins, Colorado.

\bibitem{Chen}
Chen, D., Pan, Z., Liu, J. and Connell, L.~D., 2013. An improved relative permeability model for coal reservoirs. International Journal of Coal Geology, 109, pp.~45-57.

\bibitem{Chierici}
Chierici, G.~L., 1984. Novel relations for drainage and imbibition relative permeabilities. Society of Petroleum Engineers Journal, 24(03), pp.~275-276.

\end{enbibliography}

\end{document}